\newtheorem{theorem}[subsection]{Theorem}
\newtheorem{proposition}[subsection]{Proposition}
\newtheorem{corollary}[subsection]{Corollary}
\numberwithin{equation}{section}
\newcommand{\ffrac}[2]{{\mbox{\large$\frac{#1}{#2}$}}}
\newcommand{\Rho}{\mathrm{P}}
\newcommand{\intprod}{\;\rule{5pt}{.3pt}\rule{.3pt}{7pt}\;}
\begin{document}
\title{Metric Connections in Projective Differential Geometry}
\author{Michael Eastwood}
\address{Department of Mathematics, University of Adelaide,
SA 5005, Australia}
\email{meastwoo@member.ams.org}
\author{Vladimir S.  Matveev}
\address{Mathematisches Institut,
Fakult\"at f\"ur Mathematik und Informatik,\newline
Friedrich-Schiller-Universit\"at Jena,
07737 Jena, Germany}
\email{matveev@minet.uni-jena.de}
\thanks{This work was undertaken during the 2006 Summer Program at the
Institute for Mathematics and its Applications at the University of Minnesota.
The authors would like to thank the IMA for hospitality during this time. 
The first author is supported by the Australian Research Council.}
\subjclass{Primary 53A20; Secondary 58J70.}
\keywords{Projective differential geometry, Metric connection, Tractor.}
\begin{abstract}
We search for Riemannian metrics whose Levi-Civita connection belongs to a
given projective class. Following Sinjukov and Mike\v{s}, we show that such
metrics correspond precisely to suitably positive solutions of a certain
projectively invariant finite-type linear system of partial differential
equations. Prolonging this system, we may reformulate these equations as
defining covariant constant sections of a certain vector bundle with
connection. This vector bundle and its connection are derived from the Cartan
connection of the underlying projective structure.
\end{abstract}
\dedicatory{In memory of Thomas Branson} 
\renewcommand{\subjclassname}{\textup{2000} Mathematics Subject Classification}
\maketitle

\section{Introduction}
We shall always work on a smooth oriented manifold $M$ of dimension~$n$.
Suppose that $\nabla$ is a torsion-free connection on the tangent bundle
of~$M$. We may ask whether there is a Riemannian metric on $M$ whose geodesics
coincide with the geodesics of $\nabla$ as unparameterised curves. We shall
show that there is a linear system of partial differential equations that
precisely controls this question.

To state our results, we shall need some terminology, notation, and preliminary
observations. Two torsion-free connections $\nabla$ and $\hat\nabla$ are said
to be projectively equivalent if they have the same geodesics as
unparameterised curves. A projective structure on $M$ is a projective
equivalence class of connections. In these terms, we are given a projective
structure on $M$ and we ask whether it may be represented by a metric
connection. Questions such as this have been addressed by many authors.
Starting with a metric connection, Sinjukov \cite{s} considered the existence
of other metrics with the same geodesics. He found a system of equations that
controls this question and Mike\v{s} \cite{m} observed that essentially the
same system pertains when starting with an arbitrary projective structure.

We shall use Penrose's abstract index notation~\cite{ot} in which indices act 
as markers to specify the type of a tensor. Thus, $\omega_a$ denotes a $1$-form
whilst $X^a$ denotes a vector field. Repeated indices denote the canonical 
pairing between vectors and co-vectors. Thus, we shall write $X^a\omega_a$ 
instead of~$X\intprod\omega$. The tautological $1$-form with values in the 
tangent bundle is denoted by the Kronecker delta~$\delta_a{}^b$. 

As is well-known~\cite{e}, the geometric formulation of projective equivalence
may be re-expressed as
\begin{equation}\label{projectivechange}
\hat\nabla_aX^b=\nabla_aX^b+\Upsilon_aX^b+\delta_a{}^b\Upsilon_cX^c
\end{equation}
for an arbitrary $1$-form~$\Upsilon_a$. We shall also adopt the curvature 
conventions of~\cite{e}. In particular, it is convenient to write  
$$(\nabla_b\nabla_a-\nabla_a\nabla_b)X^b=R_{ab}X^b,$$
where $R_{ab}$ is the usual Ricci tensor, as 
$$(\nabla_b\nabla_a-\nabla_a\nabla_b)X^b=(n-1)\Rho_{ab}X^b-\beta_{ab}X^b\quad
\mbox{where }\beta_{ab}=\Rho_{ba}-\Rho_{ab}.$$
If a different connection is chosen in the projective class according
to~(\ref{projectivechange}), then 
$$\hat\beta_{ab}=\beta_{ab}+\nabla_a\Upsilon_b-\nabla_b\Upsilon_a.$$
Therefore, as a $2$-form $\beta_{ab}$ changes by an exact form. On the other
hand, the Bianchi identity implies that $\beta_{ab}$ is closed. Thus, there is
a well-defined de~Rham cohomology class $[\beta]\in H^2(M,{\mathbb{R}})$
associated to any projective structure. 
\begin{proposition} The class $[\beta]\in H^2(M,{\mathbb{R}})$ is an
obstruction to the existence of a metric connection in the given projective
class.
\end{proposition}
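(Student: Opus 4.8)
The plan is to use the fact, just established, that the class $[\beta]$ is independent of the choice of connection within the projective class: under~(\ref{projectivechange}) the tensor $\beta_{ab}$ changes only by the exact form $\nabla_a\Upsilon_b-\nabla_b\Upsilon_a$. Hence, to prove that $[\beta]=0$ it is enough to exhibit one representative connection for which $\beta_{ab}$ vanishes identically. If the projective structure contains a metric connection, I would simply take $\nabla$ to be that Levi-Civita connection and show that its $\beta_{ab}$ is zero on the nose.

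The key observation is that $\beta_{ab}$ is, up to a nonzero constant, the skew-symmetric part of the Ricci tensor. Indeed, from the conventions above we have $R_{ab}=(n-1)\Rho_{ab}-\beta_{ab}$ together with $\beta_{ab}=\Rho_{ba}-\Rho_{ab}$; since $\Rho_{[ab]}=-\tfrac12\beta_{ab}$ and $\beta_{ab}$ is already skew, taking the skew part of the first identity yields $R_{[ab]}=-\tfrac{n+1}{2}\,\beta_{ab}$. The factor $\tfrac{n+1}{2}$ never vanishes, so symmetry of the Ricci tensor is equivalent to the vanishing of $\beta_{ab}$. I would then invoke the classical fact that the Ricci tensor of a Levi-Civita connection is symmetric, a standard consequence of metric compatibility and the pair-symmetry of the Riemann curvature tensor of a metric. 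For this representative, therefore, $R_{[ab]}=0$ and hence $\beta_{ab}=0$ identically.

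Combining the two steps, the chosen metric representative has $\beta_{ab}\equiv 0$, so $[\beta]=0$ in $H^2(M,{\mathbb{R}})$. Reading this contrapositively gives the proposition: if $[\beta]\neq 0$, then no connection in the projective class can be metric. I do not anticipate any real difficulty here, since once the projective invariance of $[\beta]$ is available the argument collapses to the symmetry of the Ricci tensor of a metric connection. The only point needing a moment's care is to keep track of the numerical factor, so as to be sure that symmetry of Ricci forces $\beta_{ab}$ to vanish outright rather than merely constraining it.
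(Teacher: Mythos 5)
Your proof is correct and follows essentially the same route as the paper: the paper's one-line proof (``The Ricci tensor is symmetric for a metric connection'') relies on precisely the chain you spell out, namely that $[\beta]$ is projectively invariant, that $\beta_{ab}$ is a nonzero multiple of the skew part of the Ricci tensor, and that the Ricci tensor of a Levi-Civita connection is symmetric. Your computation $R_{[ab]}=-\tfrac{n+1}{2}\beta_{ab}$ is the correct bookkeeping with the paper's conventions, so nothing more need be said.
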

\begin{proof}
The Ricci tensor is symmetric for a metric connection.
\end{proof}
In searching for a metric connection in a given projective class, we may as
well suppose that the obstruction $[\beta]$ vanishes. For the remainder of this 
article we suppose that this is the case and we shall consider only 
representative connections with symmetric Ricci tensor. In other words, all 
connections from now on enjoy
\begin{equation}\label{schouten}
(\nabla_b\nabla_a-\nabla_a\nabla_b)X^b=(n-1)\Rho_{ab}X^b\quad\mbox{where }
\Rho_{ab}=\Rho_{ba}.\end{equation}
A convenient alternative characterisation of such connections as follows.
\begin{proposition}
A torsion-free affine connection has symmetric Ricci tensor if and only if it 
induces the flat connection on the bundle of $n$-forms.\end{proposition}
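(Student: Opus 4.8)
The plan is to relate the action of the connection on $n$-forms to the trace part of the curvature, and then to the Ricci tensor. Since the bundle of $n$-forms is a line bundle, the induced connection is flat precisely when its curvature — a $2$-form — vanishes. So the strategy is to compute this curvature $2$-form explicitly in terms of the curvature of $\nabla$ and to recognise it as (a multiple of) the skew part $\beta_{ab}=\mathrm{P}_{ba}-\mathrm{P}_{ab}$ of the Ricci tensor.

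First I would pick a nonzero local section of the $n$-form bundle, say a volume form $\epsilon_{ab\cdots c}$ with $n$ indices, and examine $\nabla_a\nabla_b\epsilon-\nabla_b\nabla_a\epsilon$. The curvature of the induced connection on this line bundle is captured by this commutator, which must be a multiple of $\epsilon$ itself, the multiple being a $2$-form $F_{ab}$; flatness of the induced connection is equivalent to $F_{ab}=0$. To identify $F_{ab}$, I would apply the standard Ricci identity componentwise to $\epsilon_{bc\cdots d}$, so that the commutator acts through a sum of curvature terms contracted into each of the $n$ lower indices. On a line bundle each such contraction produces the trace of the full curvature tensor $R_{ab}{}^c{}_d$ over the relevant slot, and summing these traces over all $n$ indices collapses — using the first Bianchi identity and the definition of the Ricci tensor as a trace of curvature — to the skew part of the Ricci tensor. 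Concretely, I expect to land on $(\nabla_a\nabla_b-\nabla_b\nabla_a)\epsilon = -\beta_{ab}\,\epsilon$ (up to sign and an overall constant), so that the induced connection is flat if and only if $\beta_{ab}=0$, which by definition is exactly the symmetry $\mathrm{P}_{ab}=\mathrm{P}_{ba}$, i.e.\ symmetry of the Ricci tensor.

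The key step, and the main bookkeeping obstacle, is the index computation showing that the $n$ separate trace terms assemble into $\beta_{ab}$ rather than into some other contraction of the curvature. The cleanest route is to note that the curvature acting on a line bundle is governed entirely by the trace $R_{ab}{}^c{}_c$, and to relate this trace to the antisymmetric Ricci combination via the conventions already fixed in the excerpt, namely $(\nabla_b\nabla_a-\nabla_a\nabla_b)X^b=(n-1)\mathrm{P}_{ab}X^b-\beta_{ab}X^b$. The subtlety is simply keeping the normalisations and signs consistent with these conventions; once that is done, the equivalence \emph{flat induced connection} $\Longleftrightarrow$ $\beta_{ab}=0$ $\Longleftrightarrow$ \emph{symmetric Ricci tensor} follows immediately. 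Since the argument is purely local and algebraic after the Ricci identity is applied, no global or analytic input is needed, and the proof reduces to this single, clean curvature calculation.
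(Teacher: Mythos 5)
Your proposal is correct and follows essentially the same route as the paper: both exploit the fact that $n$-forms constitute a line bundle, so the commutator of derivatives acts by a $2$-form, and both identify that $2$-form with the skew part of the Ricci tensor via the Bianchi symmetry (the paper just packages the trace-plus-Bianchi computation as a single contraction of the Ricci identity over $\epsilon^{abr\cdots s}$). The identification $F_{ab}=-R_{ab}{}^c{}_c$, equal to a nonzero multiple of $\beta_{ab}$, goes through exactly as you outline.
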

\begin{proof}If $\epsilon^{pqr\cdots s}$ has $n$ indices and is totally skew 
then 
$$(\nabla_a\nabla_b-\nabla_b\nabla_a)\epsilon^{pqr\cdots s}=
\kappa_{ab}\epsilon^{pqr\cdots s}$$
for some $2$-form $\kappa_{ab}$. But, by the Bianchi symmetry,
$$(\nabla_a\nabla_b-\nabla_b\nabla_a)\epsilon^{abr\cdots s}=
-2R_{ab}\epsilon^{abr\cdots s},$$
which vanishes if and only if $R_{ab}$ is symmetric. 
\end{proof} 
Having restricted our attention to affine connections that are flat on the
bundle of $n$-forms, we may as well further restrict to connections $\nabla_a$
for which there is a volume form~$\epsilon_{bc\cdots d}$ (unique up to scale)
with $\nabla_a\epsilon_{bc\cdots d}=0$. We shall refer to such connections as
special. The freedom in special connections within a given projective class is
given by (\ref{projectivechange}) where $\Upsilon_a=\nabla_af$ for an arbitrary
smooth function~$f$. Following~\cite{e}, the full curvature of a special 
connection may be conveniently decomposed:--
\begin{equation}\label{fullcurvature}
(\nabla_a\nabla_b-\nabla_b\nabla_a)X^c=
W_{ab}{}^c{}_dX^d+\delta_a{}^c\Rho_{bd}X^d-\delta_b{}^c\Rho_{ad}X^d,
\end{equation}
where $W_{ab}{}^c{}_d$ is totally trace-free and $\Rho_{ab}$ is symmetric. The 
tensor $W_{ab}{}^c{}_d$ is known as the Weyl curvature and is projectively 
invariant. 

\section{A linear system of equations}\label{linearsystem}
In this section we present, as Proposition~\ref{alternativeLeviCivita}, an
alternative characterisation of the Levi-Civita connection. The advantage of
this characterisation is that it leads, almost immediately, to a system of
linear equations that controls the metric connections within a given projective
class. The precise results are Theorems~\ref{basictheorem} and~\ref{control}.

\begin{proposition}\label{alternativeLeviCivita} 
Suppose $g^{ab}$ is a metric on 
$M$ with volume form $\epsilon_{bc\cdots d}$. Then a torsion-free connection 
$\nabla_a$ is the metric connection for $g^{ab}$ if and only if
\begin{itemize}
\item $\nabla_ag^{bc}=\delta_a{}^b\mu^c+\delta_a{}^c\mu^b$\quad for some vector
field $\mu^a$
\item $\nabla_a\epsilon_{bc\cdots d}=0$.
\end{itemize}
\end{proposition}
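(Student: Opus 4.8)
The plan is to establish the two implications separately, with almost all of the content lying in the reverse direction. The forward implication is immediate from the defining properties of the Levi--Civita connection: if $\nabla_a$ is the metric connection of $g^{ab}$ then $\nabla_ag^{bc}=0$, so the first condition holds with $\mu^a=0$, and since the Levi--Civita connection preserves the Riemannian volume form we also have $\nabla_a\epsilon_{bc\cdots d}=0$.

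For the converse, suppose $\nabla_a$ is torsion-free and satisfies the two displayed conditions; I must show it is the metric connection. As $\nabla_a$ is torsion-free, it suffices to prove that $\mu^a=0$, since then $\nabla_ag^{bc}=0$ and there is a unique such connection, namely the Levi--Civita connection. To isolate $\mu^a$, I would contract the first condition with the inverse metric $g_{bc}$, obtaining
\[
g_{bc}\nabla_ag^{bc}=g_{bc}\bigl(\delta_a{}^b\mu^c+\delta_a{}^c\mu^b\bigr)=2\mu_a,
\qquad \mu_a:=g_{ab}\mu^b .
\]

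The remaining task is to see that this trace vanishes, and this is exactly where the second condition and the compatibility of $\epsilon_{bc\cdots d}$ with $g^{ab}$ are used. For any torsion-free connection the covariant derivative of the Riemannian volume form is a multiple of it, and a short computation in a local frame gives $\nabla_a\epsilon_{bc\cdots d}=\tfrac12(g^{pq}\nabla_ag_{pq})\epsilon_{bc\cdots d}$. Since $\nabla_a\epsilon_{bc\cdots d}=0$ by hypothesis, this forces $g^{pq}\nabla_ag_{pq}=0$; differentiating $g^{pq}g_{pq}=n$ then gives $g_{pq}\nabla_ag^{pq}=-g^{pq}\nabla_ag_{pq}=0$, so the trace above indeed vanishes. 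Together with the previous display this yields $2\mu_a=0$, whence $\mu^a=0$ and $\nabla_a$ is the metric connection. Alternatively, one reaches $\mu^a=0$ in a single step by differentiating the algebraic identity $g^{b_1c_1}\cdots g^{b_nc_n}\epsilon_{b_1\cdots b_n}\epsilon_{c_1\cdots c_n}=n!$, which holds precisely because $\epsilon_{bc\cdots d}$ is the volume form of $g^{ab}$, and then using $\nabla_a\epsilon_{bc\cdots d}=0$.

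The only step requiring care is the factor-$\tfrac12$ formula for $\nabla_a\epsilon_{bc\cdots d}$, which is the sole place that the hypothesis relating $\epsilon$ to $g$ enters. Conceptually it records that $\det g=1$ with respect to $\epsilon$, so that the parallel condition $\nabla_a\epsilon_{bc\cdots d}=0$ is equivalent to the vanishing of the trace of $\nabla_ag^{bc}$; I expect this bookkeeping, rather than any genuine difficulty, to be the main obstacle.
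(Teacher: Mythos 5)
Your proof is correct, and it reaches the conclusion by a genuinely different route from the paper's. The paper never invokes the fundamental theorem of Riemannian geometry as a black box: it writes $\nabla_a\omega_b=D_a\omega_b-\Gamma_{ab}{}^c\omega_c$ with $D_a$ the Levi-Civita connection of $g^{ab}$, and works entirely with the difference tensor $\Gamma_{ab}{}^c$. There, the condition $\nabla_a\epsilon_{bc\cdots d}=0$ yields $\Gamma_{ab}{}^b=0$ (contract with $\epsilon^{bc\cdots d}$), contracting the first hypothesis with $g_{bc}$ gives $\Gamma_{ab}{}^b=\mu_a$ and hence $\mu^a=0$, and then the residual content of the hypothesis, namely $\Gamma_{acb}+\Gamma_{abc}=0$ together with $\Gamma_{abc}=\Gamma_{bac}$, forces $\Gamma_{abc}=0$ by the standard symmetric/skew index argument; so the uniqueness of the metric connection is in effect reproved along the way. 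You isolate $\mu_a$ by the same contraction with $g_{bc}$, but you dispose of the trace via the local-frame identity $\nabla_a\epsilon_{bc\cdots d}=\frac{1}{2}(g^{pq}\nabla_ag_{pq})\epsilon_{bc\cdots d}$ --- which is correct, and which is indeed the one place where the hypothesis that $\epsilon_{bc\cdots d}$ is the volume form of $g^{ab}$ enters --- followed by differentiating $g^{pq}g_{pq}=n$ to pass from $\nabla_ag_{pq}$ to $\nabla_ag^{pq}$; you then cite uniqueness of the torsion-free metric-compatible connection to finish. The trade-off: your argument is shorter granted that external theorem, and it makes the conceptual point vividly (the hypotheses say exactly that $\nabla_ag^{bc}$ vanishes up to trace and that its trace vanishes); the paper's argument is self-contained, stays in frame-free abstract-index algebra with no determinant or coordinate computation, and the difference tensor $\Gamma_{abc}$ it sets up reappears later in the paper, in the discussion following~(\ref{theotherone}).
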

\begin{proof}
Write $D_a$ for the metric connection of~$g^{ab}$. Then
\begin{equation}\label{contorsion}
\nabla_a\omega_b=D_a\omega_b-\Gamma_{ab}{}^c\omega_c
\end{equation}
for some tensor $\Gamma_{ab}{}^c=\Gamma_{ba}{}^c$. We compute
$$\epsilon^{bc\cdots d}\nabla_a\epsilon_{bc\cdots d}=
-n\epsilon^{bc\cdots d}\Gamma_{ab}{}^e\epsilon_{ec\cdots d}=
-n!\,\Gamma_{ab}{}^b$$
and so $\Gamma_{ab}{}^b=0$. Similarly,
$$\nabla_ag^{bc}=\Gamma_{ad}{}^bg^{dc}+\Gamma_{ad}{}^cg^{bd}$$
and so
\begin{equation}\label{andso}
\Gamma_{ad}{}^bg^{dc}+\Gamma_{ad}{}^cg^{bd}=
\delta_a{}^b\mu^c+\delta_a{}^c\mu^b.
\end{equation}
Let $g_{ab}$ denote the inverse of $g^{ab}$ and contract (\ref{andso}) with 
$g_{bc}$ to conclude that
$$2\Gamma_{ab}{}^b=2\mu_a\quad\mbox{where }\mu_a=g_{ab}\mu^b$$
and hence that $\mu^a=0$. If we let $\Gamma_{abc}=\Gamma_{ab}{}^dg_{cd}$, then 
(\ref{andso}) now reads
$$\Gamma_{acb}+\Gamma_{abc}=0.$$
Together with $\Gamma_{abc}=\Gamma_{bac}$, this implies that
$\Gamma_{abc}=0$. {From} (\ref{contorsion}) we see that $\nabla_a=D_a$,
which is what we wanted to show.
\end{proof}

\begin{theorem}\label{basictheorem}
Suppose $\nabla_a$ is a special torsion-free connection and there is a metric 
tensor $\sigma^{ab}$ such that 
\begin{equation}\label{key}
\nabla_a\sigma^{bc}=\delta_a{}^b\mu^c+\delta_a{}^c\mu^b\quad\mbox{for some 
vector field }\mu^a.
\end{equation}
Then $\nabla_a$ is projectively equivalent to a metric connection.
\end{theorem}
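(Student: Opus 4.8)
The plan is to start from the hypothesis $\nabla_a\sigma^{bc}=\delta_a{}^b\mu^c+\delta_a{}^c\mu^b$ and find an explicit projective change $\Upsilon_a$ so that the new connection $\hat\nabla_a$, obtained via~(\ref{projectivechange}), satisfies the two conditions of Proposition~\ref{alternativeLeviCivita}. The key observation is that the tensor $\sigma^{bc}$ is a density-weighted object: since $\nabla_a$ is special and the right-hand side of~(\ref{key}) has a very particular trace form, $\sigma^{bc}$ cannot be a genuine metric of fixed weight under projective rescaling. So I would first determine the correct projective weight of $\sigma^{bc}$ by examining how both sides of~(\ref{key}) transform under~(\ref{projectivechange}), and I expect $\sigma^{bc}$ to behave like a weighted inverse metric that, after multiplication by a suitable power of a volume density, produces an honest unweighted metric $g^{bc}$.

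The main computational step is then to contract~(\ref{key}) to extract $\mu^a$ in terms of $\sigma^{bc}$ and its derivative. Specifically, I would contract the $a$ and $b$ (or $a$ and $c$) indices: setting $a=b$ in~(\ref{key}) and summing gives $\nabla_a\sigma^{ac}=(n+1)\mu^c$, so that $\mu^c=\tfrac{1}{n+1}\nabla_a\sigma^{ac}$. This identifies $\mu^a$ as a kind of divergence of $\sigma$ and shows it is not independent data. I would then propose the projective factor $\Upsilon_a$ to be a logarithmic derivative of a scalar built from $\sigma^{bc}$ and the parallel volume form $\epsilon_{bc\cdots d}$ of the special connection — roughly, $\Upsilon_a$ proportional to $\nabla_a\log(\det\sigma)$, or equivalently defined through $\mu^a$ after lowering with the metric associated to $\sigma$.

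With this candidate $\Upsilon_a$ in hand, the verification proceeds in two parts. First I would define $g^{bc}$ as an appropriate rescaling of $\sigma^{bc}$ (by a power of the density $(\det\sigma)$ relative to $\epsilon$) so that $g^{bc}$ is a genuine metric of the correct type, and then compute $\hat\nabla_a g^{bc}$ using~(\ref{projectivechange}) and the Leibniz rule, aiming to show the $\mu^c$ term is cancelled by the $\Upsilon$ contributions so that the first bullet of Proposition~\ref{alternativeLeviCivita} holds with the new vector field (ideally with $\hat\mu^a=0$, or at least of the permitted form). Second, I would check that $\hat\nabla_a$ remains special, i.e.\ that $\hat\nabla_a\epsilon_{bc\cdots d}=0$ for a suitable volume form; since~(\ref{projectivechange}) with $\Upsilon_a=\nabla_a f$ preserves specialness and changes the parallel volume form by a conformal factor $e^{f}$, the requirement is that my chosen $\Upsilon_a$ be exact, which it will be if it is genuinely a logarithmic derivative.

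The hard part will be pinning down the precise density weight of $\sigma^{bc}$ and hence the exact power in the rescaling $g^{bc}=(\text{density})^{k}\sigma^{bc}$: the bookkeeping of weights must be done carefully so that the projective change~(\ref{projectivechange}) applied to $\sigma$ produces exactly the transformation law needed, and so that $\Upsilon_a$ comes out exact rather than merely closed. Once the correct weight is identified the two verifications reduce to the trace computations already illustrated in the proof of Proposition~\ref{alternativeLeviCivita}, so I expect the remaining algebra to be routine.
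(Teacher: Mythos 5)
Your proposal is correct and follows essentially the same route as the paper's own proof: a projective change (\ref{projectivechange}) with exact $\Upsilon_a=\nabla_af$, where $f$ is a multiple of $\log\det(\sigma)$, combined with the compensating rescaling of $\sigma^{bc}$ and of the parallel volume form, followed by an appeal to Proposition~\ref{alternativeLeviCivita}. The bookkeeping you defer is exactly what the paper carries out: $\sigma^{bc}$ carries weight $-2$ (so $\hat\sigma^{bc}=e^{-2f}\sigma^{bc}$), the choice $f=-\frac{1}{2}\log\det(\sigma)$ normalises $\widehat{\det}(\hat\sigma)=1$ so that the parallel volume form is the metric volume form of $\hat\sigma^{bc}$, and the metric produced is $g^{ab}=\det(\sigma)\,\sigma^{ab}$.
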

\begin{proof}
Consider the projectively equivalent connection    
$$\hat\nabla_aX^b=\nabla_aX^b+\Upsilon_aX^b+\delta_a{}^b\Upsilon_cX^c\quad
\mbox{where }\Upsilon_a=\nabla_af$$
for some function~$f$. If we let $\hat\sigma^{ab}\equiv e^{-2f}\sigma^{ab}$, 
then 
$$\begin{array}{rcl}
\hat\nabla_a\hat\sigma^{bc}&=&e^{-2f}
\left(-2\Upsilon_a\sigma^{bc}+\nabla_a\sigma^{bc}+2\Upsilon_a\sigma^{bc}
+\delta_a{}^b\Upsilon_d\sigma^{dc}+\delta_a{}^c\Upsilon_d\sigma^{bd}
\right)\\[3pt]
&=&e^{-2f}
\left(\delta_a{}^b\mu^c+\delta_a{}^c\mu^b
+\delta_a{}^b\Upsilon_d\sigma^{dc}+\delta_a{}^c\Upsilon_d\sigma^{bd}\right)
\end{array}$$
and so
\begin{equation}\label{hatsigmakill}
\hat\nabla_a\hat\sigma^{bc}=\delta_a{}^b\hat\mu^c+\delta_a{}^c\hat\mu^b\quad
\mbox{where }\hat\mu^a=e^{-2f}\left(\mu^a+\Upsilon_b\sigma^{ab}\right).
\end{equation}
Similarly, if we choose a volume form $\epsilon_{bc\cdots d}$ killed by
$\nabla_a$ and let $\hat\epsilon_{bc\cdots d}\equiv 
e^{(n+1)f}\epsilon_{bc\cdots d}$, then
\begin{equation}\label{hatepsilonkill}
\hat\nabla_a\hat\epsilon_{bc\cdots d}=
e^{(n+1)f}\left(\nabla_a\epsilon_{bc\cdots d}+
\Upsilon_{[a}\epsilon_{bc\cdots d]}\right)
=e^{(n+1)f}\nabla_a\epsilon_{bc\cdots d}=0.
\end{equation}
Define 
$$\det(\sigma)\equiv
\epsilon_{a\cdots b}\epsilon_{c\cdots d}\sigma^{ac}\cdots\sigma^{bd}$$
and compute
$$\begin{array}{rcl}\widehat{\det}(\hat\sigma)&=&
\hat\epsilon_{a\cdots b}\hat\epsilon_{c\cdots d}
\hat\sigma^{ac}\cdots\hat\sigma^{bd}\\[3pt]
&=&e^{2(n+1)f}e^{-2nf}
\epsilon_{a\cdots b}\epsilon_{c\cdots d}\sigma^{ac}\cdots\sigma^{bd}
\,=\,e^{2f}\det(\sigma).
\end{array}$$
Therefore, if we take 
$$f=-\ffrac{1}{2}\log\det(\sigma),$$
then we have arranged that $\widehat{\det}(\hat\sigma)=1$. This is precisely
the condition that $\hat\epsilon_{bc\cdots d}$ be the volume form for the
metric $\hat\sigma^{ab}$. With (\ref{hatsigmakill}) and (\ref{hatepsilonkill})
we are now in a position to use Proposition~\ref{alternativeLeviCivita} to
conclude that $\hat\nabla_a$ is the metric connection for $\hat\sigma^{ab}$. We
have shown that our original connection $\nabla_a$ is projectively equivalent
to the Levi-Civita connection for the metric
$g^{ab}\equiv\det(\sigma)\,\sigma^{ab}$.
\end{proof}
Evidently, the equations (\ref{key}) precisely control the metric
connections within a given special projective class. Precisely, if $g_{ab}$ is
a Riemannian metric with associated Levi-Civita connection~$\nabla_a$, then
$$\hat\nabla_a\hat g^{bc}=\delta_a{}^b\hat\mu^c+\delta_a{}^c\hat\mu^b,$$
where $\hat\nabla_a$ is projectively equivalent to $\nabla_a$ according to 
(\ref{projectivechange}) with $\Upsilon_a=\nabla_af$ and where 
$\hat g^{bc}=e^{-2f}g^{bc}$. In other words, we have shown (cf.~\cite{m,s}):--
\begin{theorem}\label{control}
There is a one-to-one correspondence between solutions
of~{\,\rm(\ref{key})} for positive definite~$\sigma^{bc}$ and metric 
connections that are projectively equivalent to~$\nabla_a$.
\end{theorem}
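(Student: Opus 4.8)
The plan is to prove the assertion by exhibiting two explicit maps between the positive definite solutions of~(\ref{key}) and the metric connections in the projective class, and then checking that they are mutually inverse. One direction is already supplied by Theorem~\ref{basictheorem}: a positive definite solution $\sigma^{bc}$ produces the metric $g^{ab}=\det(\sigma)\,\sigma^{ab}$, whose Levi-Civita connection is obtained from $\nabla_a$ by the projective change~(\ref{projectivechange}) with $\Upsilon_a=\nabla_a f$ and $f=-\ffrac{1}{2}\log\det(\sigma)$, and so lies in the given class. This is the map from solutions to metric connections, realised concretely through the metric $\det(\sigma)\,\sigma^{ab}$.

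For the reverse map I would start from a metric connection in the class, that is, the Levi-Civita connection of a Riemannian metric $g_{ab}$. Since this connection and the chosen special connection $\nabla_a$ are both special and projectively equivalent, they are related by~(\ref{projectivechange}) with $\Upsilon_a=\nabla_a f$; the additive constant in $f$ is fixed by demanding that the Riemannian volume form of $g_{ab}$ equal $e^{(n+1)f}\epsilon_{bc\cdots d}$, which is legitimate because a special connection preserves its volume form uniquely up to scale, exactly as in~(\ref{hatepsilonkill}). Running the computation of Theorem~\ref{basictheorem} in reverse---this is the content of the display immediately preceding the theorem---one checks that $\sigma^{bc}\equiv e^{2f}g^{bc}$ satisfies~(\ref{key}) with $\mu^a$ read off from~(\ref{hatsigmakill}), and it is positive definite because $g^{bc}$ is. This is the map from metric connections back to solutions.

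Finally I would verify that the two maps compose to the identity in either order, and here lies the only real work. The key computation is that, with $f$ normalised as above, $\det(\sigma)=e^{-2f}$: writing $\epsilon_{bc\cdots d}=e^{-(n+1)f}\hat\epsilon_{bc\cdots d}$ in terms of the Riemannian volume form $\hat\epsilon_{bc\cdots d}$ of $g_{ab}$, and using that this $\hat\epsilon$ renders the determinant of $g^{ab}$ equal to~$1$, the powers of $e^{f}$ collapse to give exactly $e^{-2f}$. This identity shows that the normalising function $f=-\ffrac{1}{2}\log\det(\sigma)$ recovered by the forward map coincides with the $f$ fixed in the reverse map, so that the composite returns the same connection and the same metric $g^{ab}=\det(\sigma)\,\sigma^{ab}$, and symmetrically in the other order. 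The main obstacle is therefore not any isolated step but keeping the determinant and volume-form bookkeeping consistent throughout: it is precisely this normalisation---tying each solution to the definite metric $\det(\sigma)\,\sigma^{ab}$ together with its volume form---that pins down the overall scale and makes the correspondence exact.
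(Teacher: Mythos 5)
Your proposal is correct and takes essentially the same route as the paper: the forward direction is Theorem~\ref{basictheorem}, and the reverse direction is exactly the projective-invariance computation displayed immediately before the theorem, which produces the positive definite solution $e^{2f}g^{bc}$ from a metric connection in the class. Your determinant bookkeeping, $\det(\sigma)=e^{-2f}$, correctly fills in the mutual-inverse verification that the paper leaves implicit.
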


\section{Prolongation}
Let us consider the system of equations (\ref{key}) in more detail. It is 
a linear system for any symmetric contravariant $2$-tensor~$\sigma^{bc}$. 
Specifically, we may write (\ref{key}) as 
\begin{equation}\label{tfp}
\mbox{the trace-free part of }(\nabla_a\sigma^{bc})=0\end{equation}
or, more explicitly, as
$$\nabla_a\sigma^{bc}-\ffrac{1}{n+1}\delta_a{}^b\nabla_d\sigma^{cd}
-\ffrac{1}{n+1}\delta_a{}^c\nabla_d\sigma^{bd}=0.$$
According to~\cite{bceg}, this equation is of finite-type and may be prolonged 
to a closed system as follows.
According to (\ref{schouten}) and (\ref{fullcurvature}) we have
$$(\nabla_a\nabla_b-\nabla_b\nabla_a)\sigma^{bc}=W_{ab}{}^c{}_d\sigma^{bd}
+\delta_a{}^c\Rho_{bd}\sigma^{bd}-n\Rho_{ad}\sigma^{cd}.$$
On the other hand, from (\ref{key}) we have
$$(\nabla_a\nabla_b-\nabla_b\nabla_a)\sigma^{bc}=
(n+1)\nabla_a\mu^c-\nabla_b(\delta_a{}^b\mu^c+\delta_a{}^c\mu^b)=
n\nabla_a\mu^c-\delta_a{}^c\nabla_b\mu^b.$$
We conclude that 
$$n\nabla_a\mu^c=
\delta_a{}^c\left(\nabla_b\mu^b+\Rho_{bd}\sigma^{bd}\right)
-n\Rho_{ad}\sigma^{cd}+W_{ab}{}^c{}_d\sigma^{bd}$$
or, equivalently, that
\begin{equation}\label{second}\nabla_a\mu^c=\delta_a{}^c\rho
-\Rho_{ad}\sigma^{cd}+\ffrac{1}{n}W_{ab}{}^c{}_d\sigma^{bd},\end{equation}
for some function $\rho$. To complete the prolongation, we use 
(\ref{schouten}) to write
$$(\nabla_c\nabla_a-\nabla_a\nabla_c)\mu^c=(n-1)\Rho_{ac}\mu^c$$
whereas from (\ref{second}) we also have
$$(\nabla_c\nabla_a-\nabla_a\nabla_c)\mu^c=\nabla_c\left(
\delta_a{}^c\rho
-\Rho_{ad}\sigma^{cd}+\ffrac{1}{n}W_{ab}{}^c{}_d\sigma^{bd}\right)
-\nabla_a\left(n\rho
-\Rho_{cd}\sigma^{cd}\right).$$
Therefore,
\begin{equation}\label{hold}(n-1)\Rho_{ac}\mu^c=\nabla_c\left(
-\Rho_{ad}\sigma^{cd}+\ffrac{1}{n}W_{ab}{}^c{}_d\sigma^{bd}\right)
-(n-1)\nabla_a\rho
+\nabla_a(\Rho_{cd}\sigma^{cd}).\end{equation}
The terms involving Weyl curvature
$$\nabla_c(W_{ab}{}^c{}_d\sigma^{bd})=(\nabla_cW_{ab}{}^c{}_d)\sigma^{bd}
+W_{ab}{}^c{}_d\nabla_c\sigma^{bd}$$
may be dealt with by (\ref{key}) and a Bianchi identity
$$\nabla_cW_{ab}{}^c{}_d=(n-2)(\nabla_a\Rho_{bd}-\nabla_b\Rho_{ad}).$$
We see that
$$\nabla_c(W_{ab}{}^c{}_d\sigma^{bd})=
(n-2)(\nabla_a\Rho_{bd}-\nabla_b\Rho_{ad})\sigma^{bd}$$
and (\ref{hold}) becomes
$$(n-1)\Rho_{ac}\mu^c=
\ffrac{n-2}{n}(\nabla_a\Rho_{bd}-\nabla_b\Rho_{ad})\sigma^{bd}
-\nabla_c(
\Rho_{ad}\sigma^{cd})
-(n-1)\nabla_a\rho
+\nabla_a(\Rho_{cd}\sigma^{cd})$$
or, equivalently,
\begin{equation}\label{newhold}\Rho_{ac}\mu^c=
\ffrac{2}{n}(\nabla_a\Rho_{bd}-\nabla_b\Rho_{ad})\sigma^{bd}
-\ffrac{1}{n-1}\Rho_{ad}\nabla_c\sigma^{cd}
-\nabla_a\rho
+\ffrac{1}{n-1}\Rho_{cd}\nabla_a\sigma^{cd}.\end{equation}
Again, we substitute from (\ref{key}) to rewrite
$$\Rho_{cd}\nabla_a\sigma^{cd}-\Rho_{ad}\nabla_c\sigma^{cd}=
\Rho_{cd}(\delta_a{}^c\mu^d+\delta_a{}^d\mu^c)-(n+1)\Rho_{ad}\mu^d=
-(n-1)\Rho_{ad}\mu^d$$
and (\ref{newhold}) becomes
$$\Rho_{ac}\mu^c=
\ffrac{2}{n}(\nabla_a\Rho_{bd}-\nabla_b\Rho_{ad})\sigma^{bd}
-\Rho_{ad}\mu^d-\nabla_a\rho,$$
which we may rearrange as
$$\nabla_a\rho=-2\Rho_{ab}\mu^b+
\ffrac{2}{n}(\nabla_a\Rho_{bd}-\nabla_b\Rho_{ad})\sigma^{bd}.$$
Together with (\ref{key}) and~(\ref{second}), we have a closed system,
essentially as in~\cite{m,s}:--
\begin{equation}\label{closure}\begin{array}{rcl}
\nabla_a\sigma^{bc}&=&\delta_a{}^b\mu^c+\delta_a{}^c\mu^b\\[3pt]
\nabla_a\mu^b&=&\delta_a{}^b\rho
-\Rho_{ac}\sigma^{bc}+\ffrac{1}{n}W_{ac}{}^b{}_d\sigma^{cd}\\[3pt]
\nabla_a\rho&=&-2\Rho_{ab}\mu^b+
\ffrac{4}{n}Y_{abc}\sigma^{bc}
\end{array}\end{equation}
where $Y_{abc}=\ffrac{1}{2}(\nabla_a\Rho_{bc}-\nabla_b\Rho_{ac})$, the
Cotton-York tensor. The three tensors $\sigma^{bc}$, $\mu^b$, and $\rho$ may be
regarded together as a section of the vector bundle
$$\textstyle{\mathcal{T}}=\bigodot^2\!TM\oplus TM\oplus{\mathbb{R}}$$
where $\bigodot$ denotes symmetric tensor product and ${\mathbb{R}}$ denotes
the trivial bundle. We have proved:-- 
\begin{theorem}\label{prolongedconnection}
If we endow ${\mathcal{T}}$ with the connection
\begin{equation}\label{tampered}\left\lgroup\begin{array}c
\sigma^{bc}\\[3pt]
\mu^b\\[3pt]
\rho
\end{array}\right\rgroup\longmapsto
\left\lgroup\begin{array}c
\nabla_a\sigma^{bc}-\delta_a{}^b\mu^c-\delta_a{}^c\mu^b\\[3pt]
\nabla_a\mu^b-\delta_a{}^b\rho+\Rho_{ac}\sigma^{bc}
-\frac{1}{n}W_{ac}{}^b{}_d\sigma^{cd}\\[3pt]
\nabla_a\rho+2\Rho_{ab}\mu^b-\frac{4}{n}Y_{abc}\sigma^{bc}
\end{array}\right\rgroup.\end{equation}
then there is a one-to-one correspondence between covariant constant
sections of ${\mathcal{T}}$ and solutions $\sigma^{bc}$ of~{\,\rm(\ref{key})}.
\end{theorem}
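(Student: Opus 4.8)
The plan is to observe that the connection (\ref{tampered}) has been arranged precisely so that a section $(\sigma^{bc},\mu^b,\rho)$ of $\mathcal{T}$ is covariant constant exactly when the three displayed components vanish, and these three vanishing conditions are nothing other than the closed system (\ref{closure}): the top slot is (\ref{key}), the middle slot is (\ref{second}), and the bottom slot is the derived equation for $\nabla_a\rho$. Thus the theorem amounts to exhibiting a bijection between solutions of the single equation (\ref{key}) and solutions of the full system (\ref{closure}), and this is already implicit in the prolongation carried out above.

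First I would dispose of the trivial direction. If a section is covariant constant, then the vanishing of its top component is literally (\ref{key}), so its $\sigma^{bc}$-component solves (\ref{key}). This defines a map from covariant constant sections to solutions of (\ref{key}).

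For the reverse direction, and for injectivity simultaneously, I would show that $\mu^b$ and $\rho$ are forced by $\sigma^{bc}$. Tracing (\ref{key}) over $a$ and $b$ gives $(n+1)\mu^c=\nabla_a\sigma^{ac}$, so $\mu^b$ is determined. The computation preceding the theorem then shows, via the Ricci identity (\ref{schouten}) together with (\ref{fullcurvature}), that (\ref{key}) forces $\nabla_a\mu^c$ into the shape (\ref{second}), and tracing that relation fixes $\rho=\ffrac{1}{n}(\nabla_b\mu^b+\Rho_{bd}\sigma^{bd})$. Hence the whole triple $(\sigma^{bc},\mu^b,\rho)$ is recovered from $\sigma^{bc}$ alone, so a covariant constant section is reconstructed from its $\sigma^{bc}$-component and the map of the previous paragraph is a bijection.

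It remains only to verify that this candidate triple is genuinely covariant constant, that is, that all three equations of (\ref{closure}) hold. The first is the hypothesis (\ref{key}); the second is exactly (\ref{second}), which the prolongation derives from (\ref{key}) and the Ricci identity, with $\rho$ defined as its trace part. The third is the integrability condition obtained by evaluating $(\nabla_c\nabla_a-\nabla_a\nabla_c)\mu^c$ in two ways, once from (\ref{schouten}) and once by differentiating (\ref{second}); this is precisely the calculation already completed before the theorem. Its only nontrivial ingredient is the contracted Bianchi identity $\nabla_cW_{ab}{}^c{}_d=(n-2)(\nabla_a\Rho_{bd}-\nabla_b\Rho_{ad})$, used to eliminate the Weyl terms. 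I expect this Bianchi manipulation to be the sole substantive step; everything else is bookkeeping with repeated substitution of (\ref{key}) and (\ref{second}).
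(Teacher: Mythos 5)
Your proposal is correct and follows essentially the same route as the paper: the paper's ``proof'' is precisely the prolongation computation preceding the theorem, which derives (\ref{second}) and the $\nabla_a\rho$ equation from (\ref{key}) via the Ricci identity (\ref{schouten}), (\ref{fullcurvature}), and the contracted Bianchi identity, so that covariant constancy for (\ref{tampered}) is exactly the closed system (\ref{closure}). Your additional observation that $\mu^b$ and $\rho$ are recovered from $\sigma^{bc}$ by tracing (\ref{key}) and (\ref{second}) is the same mechanism that makes the correspondence bijective in the paper, merely spelled out more explicitly.
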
    

\section{Projective invariance}
The equation (\ref{key}) is projectively invariant in the following sense.
Following~\cite{e}, let ${\mathcal{E}}^{(ab)}(w)$ denote the bundle of
symmetric contravariant $2$-tensors of projective weight~$w$. Thus, in the
presence of a volume form $\epsilon_{bc\cdots d}$, a section
$\sigma^{ab}\in\Gamma(M,{\mathcal{E}}^{(ab)}(w))$ is an ordinary symmetric 
contravariant $2$-tensor but if we 
change volume form 
$$\epsilon_{bc\cdots d}\mapsto
\hat\epsilon_{bc\cdots d}=e^{(n+1)f}\epsilon_{bc\cdots d}\quad
\mbox{for any smooth function }f,$$
then we are obliged to rescale $\sigma^{ab}$ according to
$\hat\sigma^{ab}=e^{wf}\sigma^{ab}$. Equivalently, we are saying that
${\mathcal{E}}^{(ab)}(w)=\bigodot^2\!TM\otimes(\Lambda^n)^{-w/(n+1)}$, where
$\Lambda^n$ is the line-bundle of $n$-forms on~$M$. The projectively weighted
irreducible tensor bundles are fundamental objects on a manifold with
projective structure.
\begin{proposition}
The differential operator
\begin{equation}\label{tfpop}{\mathcal{E}}^{(ab)}(-2)\longrightarrow
\mbox{\rm the trace-free part of }{\mathcal{E}}_a{}^{(bc)}(-2)\end{equation}
defined by {\rm(\ref{tfp})} is projectively invariant.
\end{proposition}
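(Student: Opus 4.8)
The plan is to verify that the operator $(\ref{tfpop})$ assigns to each $\sigma^{ab}\in\Gamma(M,\mathcal{E}^{(ab)}(-2))$ a single well-defined section of the target bundle, independent of which special connection in the projective class is used to compute it. Since any two special connections are related by $(\ref{projectivechange})$ with $\Upsilon_a=\nabla_af$, their volume forms being related by $\hat\epsilon_{bc\cdots d}=e^{(n+1)f}\epsilon_{bc\cdots d}$, it is enough to compute how the trace-free part of $\nabla_a\sigma^{bc}$ behaves under such a change and to compare the result with the rescaling law of a weight $-2$ section of the target.

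First I would record the action of $\hat\nabla_a$ on an ordinary symmetric contravariant $2$-tensor. Applying $(\ref{projectivechange})$ to each upper index through the Leibniz rule gives
$$\hat\nabla_a\tau^{bc}=\nabla_a\tau^{bc}+2\Upsilon_a\tau^{bc}+\delta_a{}^b\Upsilon_d\tau^{dc}+\delta_a{}^c\Upsilon_d\tau^{bd}.$$
A section of $\mathcal{E}^{(ab)}(-2)$ is represented, in the trivialisation afforded by $\epsilon_{bc\cdots d}$, by an ordinary tensor $\sigma^{bc}$, and the change of volume form above forces the representative to rescale as $\hat\sigma^{bc}=e^{-2f}\sigma^{bc}$. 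Substituting and differentiating is then exactly the computation already carried out in the proof of Theorem~\ref{basictheorem}: the term arising from differentiating $e^{-2f}$ contributes $-2\Upsilon_a\sigma^{bc}$, which cancels the $+2\Upsilon_a\sigma^{bc}$ coming from the two upper indices, leaving
$$\hat\nabla_a\hat\sigma^{bc}=e^{-2f}\left(\nabla_a\sigma^{bc}+\delta_a{}^b\Upsilon_d\sigma^{dc}+\delta_a{}^c\Upsilon_d\sigma^{bd}\right).$$

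The crux is then the observation that the two surviving correction terms are pure trace: writing $V^c=\Upsilon_d\sigma^{dc}$, they take the form $\delta_a{}^bV^c+\delta_a{}^cV^b$, and, as the explicit formula defining $(\ref{tfp})$ shows, any tensor of this shape lies entirely in the complement of the trace-free part. Projecting onto the trace-free part therefore annihilates them, and I conclude that the trace-free part of $\hat\nabla_a\hat\sigma^{bc}$ equals $e^{-2f}$ times the trace-free part of $\nabla_a\sigma^{bc}$. Since $e^{-2f}$ is precisely the factor by which a weight $-2$ section of the target rescales under $\hat\epsilon_{bc\cdots d}=e^{(n+1)f}\epsilon_{bc\cdots d}$, the two presentations define the same section, which is exactly what projective invariance of $(\ref{tfpop})$ means.

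I do not expect a genuine obstacle here, since the computation is essentially bookkeeping that duplicates part of the proof of Theorem~\ref{basictheorem}. The only point demanding care is the weight: the cancellation of the $\Upsilon_a$-proportional terms happens if and only if the weight is exactly $-2$, for any other weight would leave an uncancelled multiple of $\Upsilon_a\sigma^{bc}$, which is not pure trace and would survive the projection and spoil invariance. Confirming that the residual terms are genuinely pure trace, and hence invisible to the trace-free projection, is the one substantive check.
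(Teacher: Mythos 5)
Your proposal is correct and follows essentially the same route as the paper's own proof: compute $\hat\nabla_a\hat\sigma^{bc}$ from (\ref{projectivechange}) with $\hat\sigma^{bc}=e^{-2f}\sigma^{bc}$, observe that the $\pm2\Upsilon_a\sigma^{bc}$ terms cancel (precisely because the weight is $-2$), and note that the surviving terms $\delta_a{}^b\Upsilon_d\sigma^{dc}+\delta_a{}^c\Upsilon_d\sigma^{bd}$ are pure trace and hence annihilated by the trace-free projection. Your added remarks on why the weight must be exactly $-2$ and the explicit verification that the residual terms are pure trace are sound elaborations of what the paper leaves implicit.
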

\begin{proof}
This is already implicit in the proof of Theorem~\ref{basictheorem}.
Explicitly, however, we just compute from (\ref{projectivechange}):--
$$\hat\nabla_a\hat\sigma^{bc}=
\nabla_a\hat\sigma^{bc}+2\Upsilon_a\hat\sigma^{bc}
+\delta_a{}^b\Upsilon_d\hat\sigma^{dc}
+\delta_a{}^c\Upsilon_d\hat\sigma^{bd},$$
where $\Upsilon_a=\nabla_af$ whilst
$$\nabla_a\hat\sigma^{bc}=\nabla_a(e^{-2f}\sigma^{bc})=
e^{-2f}\left(\nabla_a\sigma^{bc}-2\Upsilon_a\sigma^{bc}\right)=
\widehat{\nabla_a\sigma^{bc}}-2\Upsilon_a\hat\sigma^{bc}.$$
It follows that
$$\hat\nabla_a\hat\sigma^{bc}=\widehat{\nabla_a\sigma^{bc}}+\mbox{trace 
terms},$$
which is what we wanted to show.
\end{proof}

In hindsight, it is not too difficult to believe that (\ref{tfp}) should
control the metric connections within a given projective class. There are very
few projectively invariant operators. In fact, there are precisely two
finite-type first order invariant linear operators on symmetric $2$-tensors.
One of them is (\ref{tfpop}) and the other is
\begin{equation}\label{theotherone}
{\mathcal{E}}_{(ab)}(4)\rightarrow{\mathcal{E}}_{(abc)}(4)\quad
\mbox{given by}\quad\sigma_{ab}\mapsto\nabla_{(a}\sigma_{bc)}.\end{equation}
In two dimensions, (\ref{theotherone}) and (\ref{tfpop}) coincide. In higher
dimensions, however, being in the kernel of (\ref{theotherone}) for positive
definite $\sigma_{ab}$ corresponds to having a metric $g_{ab}$ and a totally
trace-free tensor $\Gamma_{abc}$ with 
$$\Gamma_{abc}=\Gamma_{bac}
\quad\mbox{and}\quad
\Gamma_{abc}+\Gamma_{bca}+\Gamma_{cab}=0$$
such that the connection
$$\omega_b\longmapsto D_a\omega_b-\Gamma_{ab}{}^c\omega_c$$
belongs to the projective class of~$\nabla_a$, where $D_a$ is the Levi-Civita
connection of~$g_{ab}$. The available tensors $\Gamma_{abc}$ for a given metric
have dimension $n(n+2)(n-2)/3$.

\section{Relationship to the Cartan connection}
On a manifold with projective structure, it is shown in \cite{e} how to
associate vector bundles with connection to any irreducible representation of
${\mathrm{SL}}(n+1,{\mathbb{R}})$. These are the tractor bundles following
their construction by Thomas~\cite{t}. Equivalently, they are induced by the
Cartan connection \cite{c} of the projective structure.
The re\-le\-vant tractor bundle in our case is induced by
$\bigodot^2\!{\mathbb{R}}^{n+1}$ where ${\mathbb{R}}^{n+1}$ is the defining
representation of ${\mathrm{SL}}(n+1,{\mathbb{R}})$. It has a composition 
series
$${\mathcal{E}}^{(BC)}
={\mathcal{E}}^{(bc)}(-2)+{\mathcal{E}}^b(-2)+{\mathcal{E}}(-2)$$
and in the presence of a connection is simply the direct sum of these bundles.
Under projective change of connection according to (\ref{projectivechange}),
however, we decree that
\begin{equation}\label{changetractors}\widehat{\left\lgroup\begin{array}c
\sigma^{bc}\\[3pt]
\mu^b\\[3pt]
\rho
\end{array}\right\rgroup}=\left\lgroup\begin{array}c
\sigma^{bc}\\[3pt]
\mu^b+\Upsilon_c\sigma^{bc}\\[3pt]
\rho+2\Upsilon_b\mu^b+\Upsilon_b\Upsilon_c\sigma^{bc}
\end{array}\right\rgroup.\end{equation}
Following~\cite{e}, the tractor connection on
${\mathcal{E}}^{(AB)}$ is given by
$$\nabla_a\left\lgroup\begin{array}c
\sigma^{bc}\\[3pt]
\mu^b\\[3pt]
\rho
\end{array}\right\rgroup=
\left\lgroup\begin{array}c
\nabla_a\sigma^{bc}-\delta_a{}^b\mu^c-\delta_a{}^c\mu^b\\[3pt]
\nabla_a\mu^b-\delta_a{}^b\rho
+\Rho_{ac}\sigma^{bc}\\[3pt]
\nabla_a\rho+2\Rho_{ab}\mu^b
\end{array}\right\rgroup.$$
Therefore, we have proved:--
\begin{theorem}\label{modifiedtractors}
The solutions of {\,\rm(\ref{key})} are in one-to-one
correspondence with solutions of the following system:--
\begin{equation}\label{newkey}\nabla_a\left\lgroup\begin{array}c
\sigma^{bc}\\
\mu^b\\
\rho
\end{array}\right\rgroup-\frac{1}{n}\left\lgroup\begin{array}c
0\\
W_{ac}{}^b{}_d\sigma^{cd}\\
4Y_{abc}\sigma^{bc}\end{array}\right\rgroup=0.\end{equation}
\end{theorem}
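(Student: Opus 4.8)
The plan is to recognise that the system (\ref{newkey}) is nothing more than the prolongation (\ref{closure}) rewritten in tractor notation, so that the theorem follows at once from Theorem~\ref{prolongedconnection}. The only work is to expand the tractor connection on ${\mathcal{E}}^{(AB)}$ using the explicit formula recorded just above the statement, subtract the curvature correction appearing in (\ref{newkey}), and check that the result is exactly the prolongation connection (\ref{tampered}).

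First I would write out the three components of (\ref{newkey}). In the top slot the correction term is zero, so this slot reads $\nabla_a\sigma^{bc}-\delta_a{}^b\mu^c-\delta_a{}^c\mu^b$, which is the first line of (\ref{tampered}). In the middle slot the tractor connection contributes $\nabla_a\mu^b-\delta_a{}^b\rho+\Rho_{ac}\sigma^{bc}$ and we subtract $\frac{1}{n}W_{ac}{}^b{}_d\sigma^{cd}$, reproducing the second line of (\ref{tampered}). In the bottom slot the tractor connection gives $\nabla_a\rho+2\Rho_{ab}\mu^b$ and we subtract $\frac{4}{n}Y_{abc}\sigma^{bc}$, reproducing the third line. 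Thus setting (\ref{newkey}) to zero is identical, component by component, to requiring that the connection (\ref{tampered}) annihilate the section $(\sigma^{bc},\mu^b,\rho)$.

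Having established this identification, I would invoke Theorem~\ref{prolongedconnection}, which already furnishes a one-to-one correspondence between covariant constant sections of (\ref{tampered}) and solutions $\sigma^{bc}$ of (\ref{key}). Since the solutions of (\ref{newkey}) are precisely these covariant constant sections, the desired correspondence is immediate. I expect no genuine obstacle here: the only point requiring care is the sign and coefficient bookkeeping between the tractor connection formula and the curvature terms $W_{ac}{}^b{}_d$ and $Y_{abc}$, but these match by construction. The substance of the theorem is conceptual rather than computational, namely that the zeroth-order terms which must be added to the tractor connection in order to prolong (\ref{key}) are exactly the Weyl and Cotton--York curvature corrections, so that the prolongation connection and the tractor connection differ only by an explicit curvature action.
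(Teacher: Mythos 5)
Your proposal is correct and matches the paper's own argument exactly: the paper likewise records the explicit tractor connection on ${\mathcal{E}}^{(AB)}$, observes that subtracting the Weyl and Cotton--York correction terms in (\ref{newkey}) reproduces the prolongation connection (\ref{tampered}) component by component, and then concludes via Theorem~\ref{prolongedconnection}. No gaps; the bookkeeping of signs and coefficients is as you describe.
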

\begin{corollary}
There is a one-to-one correspondence between solutions
of~{\,\rm(\ref{newkey})} for positive definite~$\sigma^{bc}$ and metric 
connections that are projectively equivalent to~$\nabla_a$.  
\end{corollary}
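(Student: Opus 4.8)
The plan is to obtain the Corollary by composing the two one-to-one correspondences already in hand, namely that of Theorem~\ref{control} and that of Theorem~\ref{modifiedtractors}. Theorem~\ref{control} provides a bijection between positive definite solutions $\sigma^{bc}$ of~(\ref{key}) and metric connections projectively equivalent to~$\nabla_a$, while Theorem~\ref{modifiedtractors} provides a bijection between solutions of~(\ref{key}) and solutions of the system~(\ref{newkey}). Chaining these together is all that is required, provided the two bijections are compatible on the $\sigma^{bc}$ component.

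First I would observe that the correspondence of Theorem~\ref{modifiedtractors} leaves the component $\sigma^{bc}$ untouched. Reading off the top row of~(\ref{newkey}) recovers precisely equation~(\ref{key}), so every solution $(\sigma^{bc},\mu^b,\rho)$ of~(\ref{newkey}) has first component a solution of~(\ref{key}); conversely, the prolongation leading to Theorem~\ref{prolongedconnection} constructs $\mu^b$ and $\rho$ from a given solution $\sigma^{bc}$ of~(\ref{key}) without altering $\sigma^{bc}$ itself. Hence the bijection of Theorem~\ref{modifiedtractors}, regarded as the map sending a solution of~(\ref{newkey}) to its top component, acts as the identity on $\sigma^{bc}$. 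In particular, positive definiteness of the first component is respected in both directions.

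It then follows at once that a solution of~(\ref{newkey}) has positive definite $\sigma^{bc}$ if and only if the corresponding solution of~(\ref{key}) is positive definite, and composing the two correspondences yields the asserted bijection between positive definite solutions of~(\ref{newkey}) and metric connections projectively equivalent to~$\nabla_a$. I expect no genuine obstacle here: the only point needing attention is the compatibility of the two correspondences on the $\sigma^{bc}$ slot, and this is transparent from the shape of~(\ref{newkey}); everything else is the formal composition of bijections.
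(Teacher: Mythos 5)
Your proposal is correct and matches the paper's (implicit) reasoning exactly: the paper states this corollary without proof precisely because it is the composition of the bijection of Theorem~\ref{control} with that of Theorem~\ref{modifiedtractors}, the latter acting as the identity on the $\sigma^{bc}$ component so that positive definiteness passes through unchanged. Your attention to this compatibility point is the only substantive check needed, and you have handled it correctly.
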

Notice that the extra terms in (\ref{newkey}) are projectively invariant as
they should be. Specifically, it is observed in \cite{e} that 
$$\textstyle\hat Y_{abc}=Y_{abc}+\frac{1}{2}W_{ab}{}^d{}_c\Upsilon_d$$
and so
$$4\hat Y_{abc}\sigma^{bc}=
4Y_{abc}\sigma^{bc}+2\Upsilon_bW_{ac}{}^b{}_d\sigma^{cd}$$
in accordance with~(\ref{changetractors}).

It is clear from Theorem~\ref{prolongedconnection} that, generically,
(\ref{key}) has no solutions. Indeed, this is one reason why the prolonged from
is so helpful. More generally, we should compute the curvature of the
connection (\ref{tampered}) and the form (\ref{newkey}) is useful for this
task. A model computation along these lines is given in~\cite{e}. In our case,
the tractor curvature is given by
$$(\nabla_a\nabla_b-\nabla_b\nabla_a)\left\lgroup\begin{array}c
\sigma^{cd}\\[3pt]
\mu^c\\[3pt]
\rho
\end{array}\right\rgroup=\left\lgroup\begin{array}c
W_{ab}{}^c{}_e\sigma^{de}+W_{ab}{}^d{}_e\sigma^{ce}\\[3pt]
W_{ab}{}^c{}_d\mu^d+2Y_{abd}\sigma^{cd}\\[3pt]
4Y_{abc}\mu^c
\end{array}\right\rgroup$$
and we obtain:--
\begin{proposition}
The curvature of the connection {\rm(\ref{tampered})} is given by
$$\left\lgroup\begin{array}c
\sigma^{cd}\\[3pt]
\mu^c\\[3pt]
\rho
\end{array}\right\rgroup\mapsto\left\lgroup\begin{array}c
W_{ab}{}^c{}_e\sigma^{de}+W_{ab}{}^d{}_e\sigma^{ce}\\[3pt]
W_{ab}{}^c{}_d\mu^d+2Y_{abd}\sigma^{cd}\\[3pt]
4Y_{abc}\mu^c
\end{array}\right\rgroup
+\frac{1}{n}\left\lgroup\begin{array}c
\delta_a{}^cU_b{}^d+\delta_a{}^dU_b{}^c-
\delta_b{}^cU_a{}^d-\delta_b{}^dU_a{}^c\\[3pt]
\ast\\[3pt]
\ast
\end{array}\right\rgroup,$$
where $U_b{}^d=W_{be}{}^d{}_f\sigma^{ef}$ and $\ast$ denotes expressions that
we shall not need.
\end{proposition}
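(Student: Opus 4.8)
The plan is to treat the prolongation connection (\ref{tampered}) as the tractor connection on $\mathcal{E}^{(AB)}$ perturbed by a bundle endomorphism, and to exploit the standard comparison of curvatures for two connections differing by an endomorphism-valued $1$-form. Writing $\nabla^{\mathcal{T}}_a$ for the tractor connection displayed immediately before the proposition, the connection (\ref{tampered}) is precisely $\nabla^{\mathcal{T}}_a-\Phi_a$, where $\Phi_a$ is the zeroth-order operator
$$\Phi_a\!\left\lgroup\begin{array}{c}\sigma^{bc}\\[3pt]\mu^b\\[3pt]\rho\end{array}\right\rgroup=\frac{1}{n}\left\lgroup\begin{array}{c}0\\[3pt]W_{ae}{}^b{}_f\sigma^{ef}\\[3pt]4Y_{aef}\sigma^{ef}\end{array}\right\rgroup.$$
The curvature of $\nabla^{\mathcal{T}}_a-\Phi_a$ equals that of $\nabla^{\mathcal{T}}_a$ plus the correction $-(\nabla^{\mathcal{T}}_a\Phi_b-\nabla^{\mathcal{T}}_b\Phi_a)+[\Phi_a,\Phi_b]$. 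Since the tractor curvature is already recorded above, everything reduces to evaluating these two correction terms.

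The first thing I would observe is that $\Phi_a$ is strongly filtration-lowering: its output has vanishing top slot, and its value depends only on the $\sigma$-slot of its argument. These two facts together force $\Phi_a\circ\Phi_b=0$ identically, so the quadratic term $[\Phi_a,\Phi_b]$ disappears altogether and the whole correction is the covariant exterior derivative $-(\nabla^{\mathcal{T}}_a\Phi_b-\nabla^{\mathcal{T}}_b\Phi_a)$. This structural collapse is what keeps the calculation short.

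For the top slot --- the only component the proposition states explicitly --- I would expand $(\nabla^{\mathcal{T}}_a\Phi_b)(s)=\nabla^{\mathcal{T}}_a(\Phi_b(s))-\Phi_b(\nabla^{\mathcal{T}}_a s)$ and note that the second term contributes nothing to the top slot, again because $\Phi_b$ annihilates it. The surviving piece is the top slot of $\nabla^{\mathcal{T}}_a$ applied to $\Phi_b(s)$, a section whose $\sigma$-slot is zero and whose middle slot is $\frac{1}{n}U_b{}^{d}$ with $U_b{}^d=W_{be}{}^d{}_f\sigma^{ef}$. Reading the top-slot formula of the tractor connection on a section with vanishing $\sigma$-slot yields exactly $-\frac{1}{n}(\delta_a{}^cU_b{}^d+\delta_a{}^dU_b{}^c)$; antisymmetrising in $a,b$ and carrying the overall minus sign reproduces the advertised $\frac{1}{n}$ correction, which added to the top slot of the tractor curvature gives the claim. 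The lower slots, marked $\ast$, arise by the same mechanism but are not needed.

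The main obstacle is purely one of book-keeping rather than ideas: one has to keep the filtration positions and the two sign flips straight and must correctly read off the action of $\nabla^{\mathcal{T}}_a$ on a section supported in the middle slot. The observations that $\Phi$ lowers the filtration and squares to zero compress what might otherwise look like another round of prolongation into a single transparent line, so the only real care required is in matching the curvature and index conventions fixed earlier.
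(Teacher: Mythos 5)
Your proposal is correct and takes essentially the same approach as the paper: the paper obtains the proposition precisely by regarding (\ref{tampered}) as the tractor connection perturbed by the zeroth-order term displayed in (\ref{newkey}) and adding the resulting correction to the tractor curvature recorded just above the statement. Your two structural observations (that the perturbation depends only on the $\sigma$-slot and has vanishing top slot, hence squares to zero) correctly supply the book-keeping that the paper leaves implicit, and your top-slot evaluation reproduces the stated $\frac{1}{n}$ correction exactly.
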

\begin{corollary}\label{mobility}
The curvature of the connection {\rm(\ref{tampered})} vanishes if and only if
the projective structure is flat.
\end{corollary}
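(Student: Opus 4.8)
The plan is to extract both implications directly from the curvature formula of the preceding proposition, using the standard fact that a projective structure is flat precisely when its Weyl tensor $W_{ab}{}^c{}_d$ and its Cotton--York tensor $Y_{abc}$ both vanish. (In dimension $n\geq 3$ the vanishing of $W$ already forces that of $Y$ through the Bianchi identity $\nabla_cW_{ab}{}^c{}_d=(n-2)(\nabla_a\Rho_{bd}-\nabla_b\Rho_{ad})$, while in dimension $n=2$ the tensor $W$ vanishes identically; in all cases flatness is equivalent to $W_{ab}{}^c{}_d=0$ and $Y_{abc}=0$ together.) For the easy direction I would suppose the structure flat, so that $W_{ab}{}^c{}_d=0$ and $Y_{abc}=0$ and in particular $U_b{}^d=W_{be}{}^d{}_f\sigma^{ef}=0$. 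Since every entry of both matrices in the preceding proposition is built from $W$ and $Y$ alone---including the unshown entries $\ast$, which arise only from the correction terms of~(\ref{tampered})---the curvature vanishes identically.

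For the converse the key observation is a filtration argument. The connection~(\ref{tampered}) differs from the tractor connection by the endomorphism-valued $1$-form $\Psi_a$ carrying $(\sigma^{bc},\mu^b,\rho)$ to $(0,-\frac{1}{n}W_{ac}{}^b{}_d\sigma^{cd},-\frac{4}{n}Y_{abc}\sigma^{bc})$. Because $\Psi_a$ reads only the $\sigma^{bc}$-slot and lands in the two lower slots, the composite $\Psi_{[a}\Psi_{b]}$ vanishes and the covariant derivative $\nabla\Psi$ again depends only on $\sigma^{bc}$; hence the \emph{entire} correction to the tractor curvature---the $U$-term together with both $\ast$-terms---is a function of $\sigma^{bc}$ alone. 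Restricting the curvature to the locus $\sigma^{bc}=0$ therefore removes every correction and leaves just the tractor curvature, whose lower two slots read $W_{ab}{}^c{}_d\mu^d$ and $4Y_{abc}\mu^c$.

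Assuming now that the curvature of~(\ref{tampered}) vanishes, I would evaluate it on sections with $\sigma^{bc}=0$ and $\mu^b$ arbitrary. This forces $W_{ab}{}^c{}_d\mu^d=0$ and $Y_{abc}\mu^c=0$ for every $\mu^b$, whence $W_{ab}{}^c{}_d=0$ and $Y_{abc}=0$ and the projective structure is flat. The one step deserving care is the claim that the unshown $\ast$-terms contribute nothing once $\sigma^{bc}=0$; this is exactly the filtration argument above, and it is what lets me avoid trying to isolate $W$ from the top slot alone, where the very trace adjustment that makes the curvature trace-free causes every naive contraction to collapse to an identity.
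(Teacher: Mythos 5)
Your easy direction is fine, and your instinct to read off $W$ and $Y$ from the $\mu$-dependence of the curvature (rather than, as the paper does, from its $\sigma$-dependence) could be made to work --- but the filtration claim on which your converse rests is false. The covariant derivative $\nabla\Psi$ must be taken with respect to the coupled tractor connection, and the tractor connection does not preserve the filtration: the top slot of $\nabla_a(\sigma,\mu,\rho)$ is $\nabla_a\sigma^{bc}-\delta_a{}^b\mu^c-\delta_a{}^c\mu^b$, which involves $\mu$. Hence the term $-\Psi_b(\nabla_a v)$ occurring in $(\nabla_a\Psi_b)(v)$ injects $\mu^b$ into the two lower slots, and the correction to the tractor curvature is \emph{not} a function of $\sigma^{bc}$ alone. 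Explicitly, antisymmetrising in $a,b$ and using the Bianchi symmetry $W_{ab}{}^c{}_d+W_{bd}{}^c{}_a+W_{da}{}^c{}_b=0$ (together with its analogue $Y_{abc}+Y_{bca}+Y_{cab}=0$), the $\ast$-entries contain the terms $\ffrac{3}{n}W_{ab}{}^c{}_d\mu^d$ and $\ffrac{12}{n}Y_{abc}\mu^c$. So restricting to sections with $\sigma^{bc}=0$ does not ``remove every correction'' and does not leave just the tractor curvature.

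As it happens, these unwanted terms carry the same sign as the tractor-curvature terms: at $\sigma^{bc}=0$ the middle and bottom slots of the curvature of (\ref{tampered}) are $\ffrac{n+3}{n}W_{ab}{}^c{}_d\mu^d$ and $\ffrac{4(n+3)}{n}Y_{abc}\mu^c$, and since $n+3\neq0$ your conclusion $W_{ab}{}^c{}_d=0$ and $Y_{abc}=0$ does follow. But this rescue requires computing precisely the $\mu$-dependent parts of the $\ast$-entries, which is what your argument was designed to avoid; without that computation you cannot exclude that the correction cancels the term $W_{ab}{}^c{}_d\mu^d$ outright (a coefficient $-1$ in place of $+\ffrac{3}{n}$ would destroy the proof). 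The paper sidesteps this issue entirely by using only the top slot, where the correction is displayed explicitly (the $U$-trace terms), and then invoking irreducibility of the space of tensors with symmetries (\ref{weylsymmetries}) under ${\mathrm{SL}}(n,{\mathbb{R}})$ to conclude $W_{ab}{}^c{}_d=0$ when $n\geq3$, with a separate argument for $Y_{abc}$ when $n=2$. If you wish to keep your route, you must replace the filtration claim by the explicit computation of the $\mu$-terms indicated above.
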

\begin{proof}
Let us suppose that $n\geq 3$. The uppermost entry of the curvature is given by
$$\sigma^{cd}\longmapsto\mbox{the trace-free part of }
(W_{ab}{}^c{}_e\sigma^{de}+W_{ab}{}^d{}_e\sigma^{ce})$$
and it is a matter of elementary representation theory to show that if this
expression is zero for a fixed $W_{ab}{}^c{}_d$ and for all~$\sigma^{cd}$, then
$W_{ab}{}^c{}_d=0$. Specifically, the symmetries of $W_{ab}{}^c{}_d$, namely
\begin{equation}\label{weylsymmetries}W_{ab}{}^c{}_d+W_{ba}{}^c{}_d=0\qquad
W_{ab}{}^c{}_d+W_{bd}{}^c{}_a+W_{da}{}^c{}_b=0\qquad W_{ab}{}^a{}_d=0,
\end{equation}
constitute an irreducible representation of ${\mathrm{SL}}(n,{\mathbb{R}})$.
Hence, the submodule
$$\left\{W_{ab}{}^c{}_d\;\mbox{ s.t. }\mbox{the trace-free part of }
(W_{ab}{}^c{}_e\sigma^{de}+W_{ab}{}^d{}_e\sigma^{ce})=0,
\;\forall\;\sigma^{cd}\right\}$$
must be zero since it is not the whole space. We have shown that if the
curvature of the connection (\ref{tampered}) vanishes, then $W_{ab}{}^c{}_d=0$.
For $n\geq 3$ this is exactly the condition that the projective structure be
flat. For $n=2$, the Weyl curvature $W_{ab}{}^c{}_d$ vanishes automatically
since the symmetries (\ref{weylsymmetries}) are too severe a constraint.
Instead, a similar calculation shows that $Y_{abc}=0$ and this is the condition
that the projective structure be flat.
\end{proof}

Following Mike\v{s}~\cite{m}, the dimension of the space of solutions of
(\ref{key}) is called the degree of mobility of the projective structure.
Theorem~\ref{prolongedconnection} implies that the degree of mobility is
bounded by $(n+1)(n+2)/2$ and Corollary~\ref{mobility} implies that this bound
is achieved only for the flat projective structure. Of course, the flat
projective structure may as well be represented by the flat connection
$\nabla_a=\partial/\partial x^a$ on~${\mathbb{R}}^n$, which is the Levi-Civita
connection for the standard Euclidean metric. In this case, we may use
(\ref{closure}) find the general solution of~(\ref{key}):--
\begin{equation}\label{generalsolution}
\sigma^{ab}=s^{ab}+x^am^b+x^bm^a+x^ax^br.
\end{equation} 
This form is positive definite near the origin if and only if $s^{ab}$ is
positive definite. We conclude that the general projectively flat metric near
the origin in ${\mathbb{R}}^n$ is
$$g^{ab}=\det(\sigma)\,\sigma^{ab},$$
where $\sigma^{ab}$ is as in (\ref{generalsolution}) for some positive definite
quadratic form~$s^{ab}$. In fact, these metrics are constant curvature. Rather
than prove this by calculation, there is an alternative as follows.
As already observed, the Weyl curvature $W_{ab}{}^c{}_d$ corresponds to an
irreducible representation of ${\mathrm{SL}}(n,{\mathbb{R}})$ characterised
by~(\ref{weylsymmetries}). In the presence of a metric~$g_{ab}$, however, we
should decompose $W_{ab}{}^c{}_d$ further under ${\mathrm{SO}}(n)$.
\begin{proposition} In the presence of a metric $g_{ab}$
\begin{equation}\label{weylsplit}
W_{ab}{}^c{}_d=C_{ab}{}^c{}_d
+\ffrac{1}{(n-1)(n-2)}\left(\delta_a{}^c\Phi_{bd}-\delta_b{}^c\Phi_{ad}\right)
+\ffrac{1}{n-2}\left(\Phi_a{}^cg_{bd}-\Phi_b{}^cg_{ad}\right)\end{equation}
where $C_{ab}{}^c{}_d$ is the Weyl part of the Riemann curvature tensor 
and $\Phi_{ab}$ is the trace-free part of the Ricci tensor. 
\end{proposition}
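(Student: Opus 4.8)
The plan is to write the Riemann curvature tensor $R_{ab}{}^c{}_d$ of the Levi-Civita connection of $g_{ab}$ in two ways---once projectively, using the decomposition~(\ref{fullcurvature}) already available to us, and once conformally, using the standard Riemannian splitting that defines $C_{ab}{}^c{}_d$---and then to equate the two and solve for $W_{ab}{}^c{}_d$.

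First I would pin down the projective Schouten tensor in metric terms. For a metric connection the Ricci tensor $R_{ab}$ is symmetric, and~(\ref{schouten}) identifies it as $R_{ab}=(n-1)\Rho_{ab}$, so that $\Rho_{ab}=\frac{1}{n-1}R_{ab}$. Hence (\ref{fullcurvature}) reads
$$R_{ab}{}^c{}_d=W_{ab}{}^c{}_d+\frac{1}{n-1}\left(\delta_a{}^cR_{bd}-\delta_b{}^cR_{ad}\right).$$
Next I would record the familiar Riemannian decomposition
$$R_{ab}{}^c{}_d=C_{ab}{}^c{}_d+\frac{1}{n-2}\left(\delta_a{}^cR_{bd}-\delta_b{}^cR_{ad}+g_{bd}R_a{}^c-g_{ad}R_b{}^c\right)-\frac{\mathrm{Sc}}{(n-1)(n-2)}\left(\delta_a{}^cg_{bd}-\delta_b{}^cg_{ad}\right),$$
with $C_{ab}{}^c{}_d$ totally trace-free and $\mathrm{Sc}=g^{ab}R_{ab}$; contracting this reproduces $R_{ab}$, confirming that its sign conventions agree with those of~(\ref{fullcurvature}).

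Equating the two expressions for $R_{ab}{}^c{}_d$ and solving for $W_{ab}{}^c{}_d$, the terms $\delta_a{}^cR_{bd}$ and $\delta_b{}^cR_{ad}$ combine through $\frac{1}{n-2}-\frac{1}{n-1}=\frac{1}{(n-1)(n-2)}$, presenting $W_{ab}{}^c{}_d$ as the sum of $C_{ab}{}^c{}_d$, two Ricci terms, and a scalar-curvature term. Finally I would substitute $R_{ab}=\Phi_{ab}+\frac{\mathrm{Sc}}{n}g_{ab}$: the $\Phi$-contributions assemble precisely into the two bracketed expressions of~(\ref{weylsplit}), while the scalar-curvature contributions cancel, their total coefficient being
$$\frac{1}{n(n-1)(n-2)}+\frac{1}{n(n-2)}-\frac{1}{(n-1)(n-2)}=0.$$
This yields the claimed identity.

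The only real obstacle is bookkeeping---fixing the trace and sign conventions in the Riemannian splitting and verifying the cancellation above---since conceptually the shape of the answer is forced. The projective Weyl tensor spans an irreducible $\mathrm{SL}(n,\mathbb{R})$-module which, restricted to $\mathrm{SO}(n)$, decomposes as the conformal Weyl module together with a single copy of the trace-free symmetric $2$-tensors; formula~(\ref{weylsplit}) is then the unique $\mathrm{SO}(n)$-equivariant realization of this splitting, with coefficients pinned down by the requirement that $W_{ab}{}^c{}_d$ be totally trace-free. For $n=2$ both Weyl tensors vanish and there is nothing to prove.
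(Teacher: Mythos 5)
Your proof is correct and is essentially the paper's own argument: both equate the projective decomposition $R_{ab}{}^c{}_d=W_{ab}{}^c{}_d+\delta_a{}^c\Rho_{bd}-\delta_b{}^c\Rho_{ad}$ with the standard Riemannian splitting of the curvature into conformal Weyl plus trace parts, and then solve for $W_{ab}{}^c{}_d$. The only difference is bookkeeping---the paper packages the trace terms via the conformal Schouten tensor $Q_{ab}=\frac{1}{n-2}\Phi_{ab}+\frac{1}{2n(n-1)}\mathrm{Sc}\,g_{ab}$, whereas you expand directly in Ricci and scalar curvature and check the cancellation of the scalar terms explicitly, which is exactly the ``short computation'' the paper leaves to the reader.
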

\begin{proof}According to~(\ref{fullcurvature}),
\begin{equation}\label{chalk}
R_{ab}{}^c{}_d=W_{ab}{}^c{}_d+\delta_a{}^c\Rho_{bd}-\delta_b{}^c\Rho_{ad}
\end{equation}
but the Riemann curvature decomposes according to 
\begin{equation}\label{cheese}
R_{abcd}=C_{abcd}+g_{ac}Q_{bd}-g_{bc}Q_{ad}+Q_{ac}g_{bd}-Q_{bc}g_{ad},
\end{equation}
where $Q_{ab}$ is the Schouten tensor
$$Q_{ab}=\ffrac{1}{n-2}\Phi_{ab}+\ffrac{1}{2n(n-1)}Rg_{ab}.$$
Comparing (\ref{chalk}) and (\ref{cheese}) leads, after a short computation,
to~(\ref{weylsplit}).
\end{proof}
\begin{corollary}
A projectively flat metric is constant curvature.
\end{corollary}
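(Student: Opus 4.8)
The plan is to reduce everything to the algebraic decomposition (\ref{weylsplit}) and then to separate its two constituents by an elementary orthogonality argument. Since we are dealing with a genuine (positive definite) metric in a projectively flat class, Corollary~\ref{mobility} tells us that for $n\geq3$ the Weyl curvature vanishes, $W_{ab}{}^c{}_d=0$. Feeding this into (\ref{weylsplit}) gives a single identity in which the conformal Weyl tensor $C_{ab}{}^c{}_d$ is expressed entirely in terms of the trace-free Ricci tensor $\Phi_{ab}$. My goal is to extract from this the two separate conclusions $C_{ab}{}^c{}_d=0$ and $\Phi_{ab}=0$, and then to identify the resulting curvature via (\ref{cheese}).

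First I would lower the contravariant index with $g_{ab}$ and write (\ref{weylsplit}) as $W_{abcd}=C_{abcd}+T_{abcd}$, where $T_{abcd}$ collects the two $\Phi$-terms. The key point is that $C_{abcd}$ is totally trace-free whereas every term of $T_{abcd}$ is a metric factor times $\Phi$. Consequently, on forming the squared norm $W_{abcd}W^{abcd}$ the cross term $C^{abcd}T_{abcd}$ vanishes identically: each contraction pairs a factor of $g$ from $T$ against two indices of $C$, producing a trace of the conformal Weyl tensor, which is zero. Hence $W_{abcd}W^{abcd}=C_{abcd}C^{abcd}+c_n\,\Phi_{ab}\Phi^{ab}$ for a constant $c_n$, and a short computation of $T_{abcd}T^{abcd}$ (the only surviving contractions being multiples of $\Phi_{ab}\Phi^{ab}$, since $\Phi_a{}^a=0$) shows $c_n>0$ for $n\geq3$. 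Because the metric is Riemannian, both $C_{abcd}C^{abcd}$ and $\Phi_{ab}\Phi^{ab}$ are non-negative, so $W_{ab}{}^c{}_d=0$ forces $C_{ab}{}^c{}_d=0$ and $\Phi_{ab}=0$ separately. Conceptually this is just the statement that (\ref{weylsplit}) is the orthogonal decomposition of the $\mathrm{SL}(n,\mathbb{R})$-irreducible Weyl module into its inequivalent $\mathrm{SO}(n)$-summands.

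It then remains to assemble the curvature. With $\Phi_{ab}=0$ the metric is Einstein, and the contracted Bianchi identity (Schur's theorem, valid for $n\geq3$) makes the scalar curvature $R$ constant. Substituting $C_{abcd}=0$ and $Q_{ab}=\ffrac{1}{2n(n-1)}Rg_{ab}$ into (\ref{cheese}) yields $R_{abcd}=\ffrac{R}{n(n-1)}(g_{ac}g_{bd}-g_{ad}g_{bc})$ with $R$ constant, which is precisely constant sectional curvature. The case $n=2$ must be handled separately, since (\ref{weylsplit}) degenerates and $\Phi_{ab}$ vanishes automatically: here projective flatness means $Y_{abc}=0$ by Corollary~\ref{mobility}, and as $\Rho_{ab}=Kg_{ab}$ with $K$ the Gauss curvature, contracting $Y_{abc}=\ffrac{1}{2}\big((\nabla_aK)g_{bc}-(\nabla_bK)g_{ac}\big)$ with $g^{bc}$ gives $\nabla_aK=0$, so again the curvature is constant.

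The main obstacle I anticipate is the separation step in the second paragraph: one must be sure that the Weyl and Ricci pieces of (\ref{weylsplit}) really are orthogonal and that the coefficient $c_n$ is strictly positive, for otherwise the vanishing of $W_{ab}{}^c{}_d$ would not by itself force $\Phi_{ab}=0$. This is where positive definiteness of the metric is essential, and it is the only place where genuine (if routine) computation is required; everything downstream is bookkeeping with (\ref{cheese}) together with a standard application of Schur's theorem.
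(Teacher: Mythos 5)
Your proof is correct and follows essentially the same route as the paper's: vanishing of the projective Weyl tensor plus the decomposition (\ref{weylsplit}) forces $C_{ab}{}^c{}_d=0$ and $\Phi_{ab}=0$, leaving only the scalar-curvature part of (\ref{cheese}) (whence constant curvature via Schur), with a separate $Y_{abc}$ argument in dimension two --- you have merely made explicit the separation step and the appeal to Schur's theorem, which the paper leaves implicit. One small remark: positive definiteness is not actually essential for the separation, since contracting (\ref{weylsplit}) with $g^{bd}$ gives $\frac{n}{n-1}\Phi_a{}^c=0$ directly in any signature, which is precisely the paper's point that the two pieces lie in inequivalent irreducible $\mathrm{SO}(n)$-modules.
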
    
\begin{proof} If $n\geq 3$ and the projective Weyl tensor vanishes then the
only remaining part of the Riemann curvature tensor is the scalar curvature. As
usual, a separate proof based on $Y_{abc}$ is needed for the case $n=2$.
\end{proof} 
This corollary is usually stated as follows. If a local diffeomorphism between
two Riemannian manifolds preserves geodesics and one of them is constant
curvature, then so is the other. This is a classical result due to
Beltrami~\cite{b}.

\section{Concluding remarks}
Results such as Theorem~\ref{control} and Theorem~\ref{modifiedtractors} are
quite common in projective, conformal, and other parabolic geometries. It is
shown in~\cite{e}, for example, that the Killing equation in Riemannian
geometry is projectively invariant and its solutions are in one-to-one
correspondence with covariant constant sections of the tractor bundle
${\mathcal{E}}_{[AB]}$ equipped with a connection that is derived from (but not
quite equal to) the tractor connection. The situation is completely parallel
for conformal Killing vectors in conformal geometry and, more generally, for
the infinitesimal automorphisms of parabolic geometries~\cite{andi}. It is
well-known that having an Einstein metric in a given conformal class is
equivalent to having a suitably positive covariant constant section of the
standard tractor bundle ${\mathcal{E}}^A$ equipped with its usual tractor
connection. Gover and Nurowski~\cite{gn} use this observation systematically to
find obstructions to the existence of an Einstein metric within a given
conformal class. We anticipate a similar use for Theorem~\ref{modifiedtractors}
in establishing obstructions to the existence of a metric connection within a
given projective class.


\begin{thebibliography}{XX}

\bibitem{b} E. Beltrami,
{\em Rizoluzione del problema: riportare i punti di una superficie sopra un 
piano in modo che le linee geodetiche vengano rappresentate da linee rette},
Ann. Mat. Pura Appl. {\bf 7} (1865) 185--204.

\bibitem{bceg} T.P.~Branson, A.~\v{C}ap, M.G.~Eastwood, and A.R.~Gover,
{\em Prolongations of geometric overdetermined systems},
Int. Jour. Math. {\bf 17} (2006) 641-664.

\bibitem{andi} A. \v{C}ap, 
{\em Infinitesimal automorphisms and deformations of parabolic geometries},
preprint ESI 1684 (2005), Erwin Schr\"odinger Institute, available at 
http://www.esi.ac.at.

\bibitem{c} E. Cartan,
{\em Sur les vari\'et\'es \`a connexion projective},
Bull. Soc. Math. France {\bf 52} (1924) 205--241.

\bibitem{e} M.G. Eastwood,
{\em Notes on projective differential geometry}, this volume.

\bibitem{gn} A.R. Gover and P. Nurowski,
{\em  Obstructions to conformally Einstein metrics in $n$ dimensions},
Jour. Geom. Phys. {\bf 56} (2006) 450--484.

\bibitem{m} J. Mike\v{s},
{\em Geodesic mappings of affine-connected and Riemannian spaces},
Jour. Math. Sci. {\bf 78} (1996) 311--333.

\bibitem{ot} R. Penrose and W. Rindler,
{\em Spinors and Space-time, vol. 1},
Cambridge University Press 1984.

\bibitem{s} N.S. Sinjukov, 
{\em Geodesic mappings of Riemannian spaces} (Russian), 
``Nauka,'' Moscow 1979.

\bibitem{t} T.Y. Thomas,
{\em Announcement of a projective theory of affinely connected manifolds}, 
Proc. Nat. Acad. Sci. {\bf 11} (1925) 588--589.

\end{thebibliography}
\end{document}